\newtheorem{thm}{Theorem}[section]
\newtheorem{prop}[thm]{Proposition}
\newtheorem{lem}[thm]{Lemma}
\def\N{\mathbb{N}}
\def\N{\mathbb{N}}
\def\diam{\text{\rm diam}}
\newtheorem*{thma}{Theorem A}
\def\logf{\log\frac{1}{\epsilon}}
\numberwithin{equation}{section}
\title{Variational principles for Feldman-Katok metric mean dimension}
\author{Yunxiang Xie, Ercai Chen and Rui Yang*
}
\address
{School of Mathematical Sciences and Institute of Mathematics, Nanjing Normal University, Nanjing 210023, Jiangsu, P.R.China}
\email{yxxie20@126.com}
\email{ecchen@njnu.edu.cn}
\email{zkyangrui2015@163.com}
\date{}
\begin{document}
	
\maketitle

\renewcommand{\thefootnote}{}
\footnote{2020 \emph{Mathematics Subject Classification}:    37A15, 37C45.}
\footnotetext{\emph{Key words and phrases}: Feldman-Katok metric mean dimension;  FK Katok $\epsilon$-entropy;  FK local $\epsilon$-entropy function;  Variational principle; }
\footnote{*corresponding author}

\begin{abstract}	
  We  introduce the notion of Feldman-Katok metric mean dimensions in this note.  We  show  metric mean dimensions defined by different metrics  coincide under weak tame growth of covering numbers,   and  establish  variational principles  for Feldman-Katok metric mean dimensions in terms of  FK Katok $\epsilon$-entropy and  FK  local $\epsilon$-entropy function. 
\end{abstract}

\section{Introduction}
  By a pair $(X,T)$  we mean a topological dynamical system (TDS for short),  where $X$ is a compact metric space with  metric $d$ and  $T $ is a homeomorphism on $X$. By $M(X), M(X,T), E(X,T)$ we denote the sets of Borel probability measures on $X$, $T$-invariant Borel probability measures on $X$, $T$-invariant  ergodic Borel probability measures on $X$, respectively.

  For  each  TDS, one can assign a non-negative number  to characterize the topological complexity of  system.  It  is well-known that  the classical topological entropy, defined by Bowen dynamical balls,   is an important topological invariant to help us understand the dynamical systems.  Besides, the Bowen dynamical  balls  with mistake function \cite{ps07},  dynamical balls defined by mean metric \cite{GJ16} and   Feldman-Katok metric \cite{CL21},   are also invoked and do not change the value of classical topological entropy.    It turns out that these different dynamical balls   become a   critical role in solving the problems toward Sarnak's  conjecture, multifractal analysis, the classification problems of measure-preserving systems and the other fields.  

   The present note  mainly involves  Feldman-Katok metric.   In \cite{O74,K43,F76}, the authors showed that edit distance  is  closely associated with the classification problems of measure-preserving systems.  Feldman-Katok metric  \cite{KL17}(FK metric for short)  is  the topological counterpart of edit distance.  Replacing Bowen metric by  FK metric,  Cai and Li \cite{CL21} proved that  topological entropy defined by FK metric coincides classical topological entropy \cite{w82}. Later,  Nie and Huang  \cite{NH22}  investigated the restricted sensitivity, return time and local  Brin-Katok entropy in context of FK metric.  Different from Bowen metric,   the work  \cite{KL17,KL17,NH22} suggests that the advantage to use  FK metric  is that it allows time delay by ignoring the synchronization of points in orbits with only order preserving required.  
   
   A compact metric space  $(X,d) $  is said to  have \emph{tame growth of covering numbers} if for each $\theta>0$,
   $$ \lim\limits_{\epsilon \to 0}\epsilon^{\theta}\log \#(X,d,\epsilon)=0,$$
   where $ \#(X,d,\epsilon)$ denotes the  smallest cardinality of  open balls $B_d(x,\epsilon)$ covering $X$. For example,  the compact subsets of $\mathbb{R}^n$  equipped with the Euclidean distance  have tame growth of covering numbers. More generally, it is shown that   \cite[Lemma 4]{lt18} every compact  metrizable topological space  admits a distance  satisfying such condition.   
   
   For TDS with infinite topological entropies, Lindenstrauss and Weiss \cite{lw00} introduced  the notion of metric mean dimension to  classify  such dynamical systems  and  established analogous variational principles for metric mean dimensions in terms of $L^{\infty}$-rate distortion functions and $L^{p} (1\leq p<\infty)$-rate distortion functions under the assumption of tame growth of covering numbers.
   \begin{thma}
   		Let $(X,T)$ be a TDS with a metric  $d$. Then
   	$$\overline{\rm mdim}_M(T,X,d)=\limsup_{\epsilon \to 0}\frac{1}{\log \frac{1}{\epsilon}}\sup_{\mu \in M(X,T)}R_{\mu, L^{\infty}}(\epsilon).$$  	
   	Additionally, if $d$ has tame growth of  covering numbers, then for any $1\leq p<\infty$,
   	$$\overline{\rm mdim}_M(T,X,d)=\limsup_{\epsilon \to 0}\frac{1}{\log \frac{1}{\epsilon}}\sup_{\mu \in M(X,T)}R_{\mu, L^p}(\epsilon),$$
   	where $\overline{\rm mdim}_M(T,X,d)$ denotes the upper metric mean dimension of $X$, $R_{\mu, L^p}(\epsilon),  R_{\mu, L^{\infty}}(\epsilon)$	   denote the $L^p, L^{\infty}$ rate-distortion function, respectively.
   \end{thma}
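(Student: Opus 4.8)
The plan is to prove the two identities by establishing, for each of the functionals $R_{\mu,L^\infty}(\epsilon)$ and $R_{\mu,L^p}(\epsilon)$, both inequalities between $\overline{\rm mdim}_M(T,X,d)$ and $\limsup_{\epsilon\to0}\frac1{\logf}\sup_{\i}R_{\mu,\bullet}(\epsilon)$, doing the $L^\infty$ case first and then reducing the $L^p$ case to it by means of the tame growth hypothesis. Throughout I use the description
$$\overline{\rm mdim}_M(T,X,d)=\limsup_{\epsilon\to0}\frac{S(X,d,\epsilon)}{\logf},\qquad S(X,d,\epsilon):=\limsup_{n\to\infty}\frac1n\log\mathrm{span}(X,d_n,\epsilon),$$
where $d_n(x,y)=\max_{0\le i<n}d(T^ix,T^iy)$ is the $n$-th Bowen metric and $\mathrm{span}(X,d_n,\epsilon)$ is the minimal cardinality of an $\epsilon$-spanning set of $(X,d_n)$, together with the description of $R_{\mu,\bullet}(\epsilon)$ as $\lim_{n\to\infty}\frac1n$ times the infimum of $I_\mu(X_0^{n-1};Y_0^{n-1})$ over all couplings of the $\mu$-process with a reproduction process whose per-letter $L^\infty$ (resp. $L^p$) distortion is at most $\epsilon$.

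The inequality $\sup_{\i}R_{\mu,L^\infty}(\epsilon)\le S(X,d,\epsilon)$ is the elementary half. Fix $\i$ and $n$, let $F$ be an $\epsilon$-spanning set of $(X,d_n)$ of minimal cardinality, and let $q\colon X\to F$ be a nearest-point map; then the deterministic reproduction $Y_0^{n-1}=(q(x),Tq(x),\dots,T^{n-1}q(x))$ has per-letter distortion $d(T^ix,T^iq(x))\le d_n(x,q(x))\le\epsilon$ and $I_\mu(X_0^{n-1};Y_0^{n-1})=H\big(q(\cdot)\big)\le\log\#F=\log\mathrm{span}(X,d_n,\epsilon)$. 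Dividing by $n$, letting $n\to\infty$, dividing by $\logf$ and taking $\limsup_{\epsilon\to0}$ gives one inequality for $L^\infty$; moreover an $L^\infty$-good reproduction is a fortiori $L^p$-good, so $R_{\mu,L^p}(\epsilon)\le R_{\mu,L^\infty}(\epsilon)$ for every $\i$ and the same bound holds with $L^p$ in place of $L^\infty$, which supplies one of the two inequalities needed in the $L^p$ case and requires no growth hypothesis.

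For the reverse inequality in the $L^\infty$ case one must produce, for each small $\epsilon$, an invariant measure whose rate distortion at a comparable scale is close to $S(X,d,\epsilon)$. I would take a maximal $(n,\epsilon)$-separated set $E_n$, form the averaged measure $\mu_n=\frac1n\sum_{k=0}^{n-1}T^k_*\big(\tfrac1{\#E_n}\sum_{x\in E_n}\delta_x\big)$, and pass to a weak-$*$ limit $\mu$ along a subsequence, which is $T$-invariant. The rate distortion $R_{\mu,L^\infty}(\epsilon/4)$ would then be bounded below by a converse (counting) argument: any channel achieving $L^\infty$-distortion below $\epsilon/4$ forces the reproduction blocks to have $d_m$-diameter below $\epsilon/2$ outside a set of small $\mu$-measure, so $\epsilon$-separated points cannot share a reproduction block, which pins the relevant mutual informations from below and yields $R_{\mu,L^\infty}(\epsilon/4)\gtrsim S(X,d,\epsilon)$. (Equivalently one can route this through a comparison of $R_{\mu,L^\infty}$ with a Katok-type $(n,\epsilon,\tau)$-entropy of $\mu$ and a scale-localized variational principle identifying $\sup_{\i}$ of the latter with $S(X,d,\epsilon)$.) Dividing by $\log\frac4\epsilon=\logf+O(1)$ and taking $\limsup_{\epsilon\to0}$ then closes the $L^\infty$ case.

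It remains to prove $\overline{\rm mdim}_M(T,X,d)\le\limsup_{\epsilon\to0}\frac1{\logf}\sup_{\i}R_{\mu,L^p}(\epsilon)$, and this is the only place tame growth of covering numbers enters. The idea is to upgrade an $L^p$-good code into an $L^\infty$-good one at a polynomially related scale with merely sub-exponential overhead: if $Y$ realizes $L^p$-distortion $\le\delta$ at block length $n$, then by Markov's inequality the expected number of coordinates $i<n$ with $d(X_i,Y_i)>\epsilon$ is $\lesssim(\delta/\epsilon)^pn=:k$, and re-encoding those coordinates through a minimal $\epsilon$-net of $(X,d)$ produces an $L^\infty$-distortion-$\epsilon$ reproduction with
$$R^{(n)}_{\mu,L^\infty}(\epsilon)\le R^{(n)}_{\mu,L^p}(\delta)+\log\binom{n}{k}+k\log\#(X,d,\epsilon).$$
Choosing $\epsilon=\delta^{\alpha}$ with $\alpha<p/(p+\theta)$ and using $\log\#(X,d,\epsilon)=o(\epsilon^{-\theta})$, both correction terms, divided by $n$, are $o\big(\log\frac1\delta\big)$; letting $n\to\infty$, then $\delta\to0$, then $\theta\to0$ (so $\alpha\to1$) combines with the $L^\infty$ identity to give the claim. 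I expect the genuine obstacle to be the reverse $L^\infty$ inequality of the previous paragraph: the converse-coding lower bound on $R_{\mu,L^\infty}$ is delicate because rate distortion of a source with memory does not single-letterize and the optimal channels are not explicit, and because $\mu\mapsto R_{\mu,L^\infty}(\epsilon)$ is not weak-$*$ semicontinuous — which is exactly why the estimate is run at the reduced radius $\epsilon/4$ and why one must verify that the limiting measure still carries the full $\epsilon$-scale complexity; the remaining ingredients, source coding via spanning sets and the Markov/tame-growth patching, are routine once the scales are matched with care.
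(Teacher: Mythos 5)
A preliminary remark: Theorem A is not proved in this paper at all; it is quoted from Lindenstrauss--Tsukamoto \cite{lt18} as background, and the present paper only uses it (together with its Katok-entropy analogue from \cite{shi}) as a black box in the proof of Theorem \ref{thm 1.2}. So your attempt can only be measured against the known proof in \cite{lt18}, whose overall architecture your outline does follow: the inequality $\sup_{\mu}R_{\mu,L^{\infty}}(\epsilon)\le S(X,d,\epsilon)$ by deterministic coding along a minimal $(n,\epsilon)$-spanning set, the free inequality $R_{\mu,L^{p}}\le R_{\mu,L^{\infty}}$, and the $L^{p}$ lower bound deduced from the $L^{\infty}$ one by Markov-inequality patching of the bad coordinates through an $\epsilon$-net of $(X,d)$, with tame growth absorbing the $k\log\#(X,d,\epsilon)$ overhead after matching scales $\epsilon=\delta^{\alpha}$. (One small repair there: the set of bad coordinates is random, so $\log\binom{n}{k}$ should be replaced by an entropy bound of order $n\,h(k/n)$ plus a conditional-entropy bound $\mathbb{E}|B|\cdot\log\#(X,d,\epsilon)$; this has the same asymptotics and is how the reduction is actually run.)

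The genuine gap is the hard direction of the $L^{\infty}$ case, which you flag but do not close. The mechanism you propose --- maximal $(n,\epsilon)$-separated sets $E_n$, orbit-averaged empirical measures $\mu_n$, a weak-$*$ limit $\mu$, and then ``$\epsilon$-separated points cannot share a reproduction block'' --- only makes sense for the approximating measures $\mu_n$, whose mass genuinely sits on orbit segments of points of $E_n$. For the limit measure, weak-$*$ convergence controls only integrals of finitely many fixed continuous functions, while lower-bounding $R_{\mu,L^{\infty}}$ requires an estimate valid for arbitrarily long blocks of the limiting process: neither mutual information under a distortion constraint nor the $\mu$-measure of Bowen balls passes to weak-$*$ limits, and (as you note) the rate distortion function does not single-letterize, so no fixed finite-block information about $\mu$ pins it from below. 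Shrinking the radius from $\epsilon$ to $\epsilon/4$ does not repair this failure of semicontinuity; one can in principle lose the $\epsilon$-scale complexity entirely in the limit unless the measure is constructed, and the mutual-information lower bound run, with much more care --- and this is exactly where the bulk of the work in \cite{lt18} lies. Your fallback, to route the estimate through ``a scale-localized variational principle identifying $\sup_{\mu}$ of a Katok-type $\epsilon$-entropy with $S(X,d,\epsilon)$'', presupposes a theorem of the same depth (this is the content of \cite{vv17}, \cite{shi}, \cite{gs20}), so it defers rather than supplies the missing argument. As it stands, the proposal proves the elementary inequality and the $L^{p}$ reduction, but the core of Theorem A is not established.
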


   By replacing rate-distortion functions, the authors  \cite{vv17,twl20,gs20,shi}   verified that   Lindenstrauss-Tsukamoto's variational principles  still hold for  other measure-theoretic  $\epsilon$-entropies.  Inspired by the work of \cite{lw00, lt18, CL21, GZ22},  the aim of present  note is to  introduce the notion of   Feldman-Katok metric mean dimensions and establish  variational principles for Feldman-Katok metric mean dimensions.  The first  question that we  encounter  is whether the metric mean dimensions defined by different metrics have the same metric mean dimension compared with  Feldman-Katok metric mean dimensions.

  A compact metric space  $(X,d) $  is said to  have \emph{weak tame growth of covering numbers} if 
  $$ \lim\limits_{\epsilon \to 0}\epsilon\log \#(X,d,\epsilon)=0.$$
  Obviously, this condition is weaker than tame growth of covering numbers.   The following theorem shows that different metric mean dimensions have the same metric mean dimension.
 \begin{thm}\label{thm 1.1}
 	Let $(X,T)$ be a  TDS  with a metric $d$ admitting weak
 	tame growth of covering numbers. Suppose that $g$ is a mistake function. Then 
 	\begin{align*}
 		\overline{ \rm mdim}_{FK}(T,X,d)=\overline{\rm mdim}_{M}(T,X,d)=\overline{\rm mdim}_{M}(g;T,X).
 	\end{align*}
 	Consequently, if  $d$ has
 	tame growth of covering numbers, then
 	\begin{align*}
 		&\overline{\rm mdim}_{FK}(T,X,d)= \overline{\rm mdim}_{M}(T,X,d)\\
 		=&\overline{\rm mdim}_{\hat{M}}(T,X,d)=\overline{\rm mdim}_{M}(g;T,X).		
 	\end{align*}
  where $\overline{\rm mdim}_{FK}(T,X,d), \overline{\rm mdim}_{M}(T,X,d), \overline{\rm mdim}_{\hat{M}}(T,X,d), \overline{\rm mdim}_{M}(g;T,X)$  are  metric mean dimensions defined by  and FK metric, Bowen metric, mean metric and dynamical balls with mistake function $g$.
 \end{thm}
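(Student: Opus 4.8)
The proof has two layers. First, the algebraic backbone: show all four metric-mean-dimension notions are squeezed between two "universal" bounds that do not depend on which dynamical ball one uses. Concretely, I expect a chain of inequalities of the form
\[
N_d(n,\epsilon) \;\le\; N_{\mathrm{FK}}(n,\epsilon) \;\le\; N_{g}(n,\epsilon) \;\le\; N_d(n,\epsilon')\cdot\#(X,d,\epsilon')^{\,c\cdot g(\epsilon)\cdot n}
\]
(or the matching separated-set version), where $N_d,N_{\mathrm{FK}},N_g$ are the $(n,\epsilon)$-spanning/separated numbers for the Bowen, FK, and mistake-$g$ metrics respectively, and $\epsilon'$ is a slightly larger scale depending on $\epsilon$. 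The first two inequalities are structural: an FK-$(n,\epsilon)$ ball contains a Bowen-$(n,\epsilon)$ ball (order-preserving matchings include the identity matching), and a mistake-$g$ ball is coarser still. The last inequality is the only substantive combinatorial estimate: covering a mistake-$g$ (or FK) ball by genuine Bowen balls costs a factor controlled by the number of ways to choose the $\le g(\epsilon)n$ "bad" coordinates and fill them with $\#(X,d,\epsilon')$-many choices each — this is exactly the classical Katok/mistake-function covering lemma, already implicit in \cite{ps07, CL21}.

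Second, the analytic step: feed this chain through $\frac{1}{n\log(1/\epsilon)}\log(\cdot)$, take $\limsup_{n\to\infty}$ then $\limsup_{\epsilon\to0}$. Every term except the correction factor collapses to $\overline{\rm mdim}_M(T,X,d)$, so the whole point is that
\[
\limsup_{\epsilon\to0}\frac{1}{\log(1/\epsilon)}\cdot g(\epsilon)\cdot\bigl(\limsup_{\epsilon'\to0}\epsilon'\log\#(X,d,\epsilon')\bigr)\cdot\frac{1}{\epsilon'}=0,
\]
which is precisely where \emph{weak tame growth of covering numbers} enters: it forces $\epsilon\log\#(X,d,\epsilon)\to0$, and since a mistake function satisfies $g(\epsilon)\to0$, with $\epsilon'$ comparable to $\epsilon$ the correction is $o(1)/\log(1/\epsilon)\to 0$. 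This kills the extra factor and yields $\overline{\rm mdim}_{\mathrm{FK}}=\overline{\rm mdim}_M=\overline{\rm mdim}_M(g;T,X)$. For the "consequently" part, the mean metric $\hat d$ sits between the Bowen metric and any mistake-$g$ metric in the relevant scaling sense; under the stronger \emph{tame growth of covering numbers} one recovers the sharper estimate $\epsilon^\theta\log\#(X,d,\epsilon)\to0$ for all $\theta$, which is more than enough to also squeeze $\overline{\rm mdim}_{\hat M}$ between the already-equal quantities (this is essentially the argument of \cite{GZ22} for the mean metric, imported verbatim).

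**Key steps, in order.** (1) Fix notation for the three dynamical balls and their $(n,\epsilon)$-spanning numbers; record the trivial inclusions $B_n^{\,d}(x,\epsilon)\subseteq B_n^{\,\mathrm{FK}}(x,\epsilon)\subseteq B_n^{\,g}(x,\epsilon)$ (up to harmless adjustment of $\epsilon$), giving $N_d(n,\epsilon)\ge N_{\mathrm{FK}}(n,\epsilon)\ge N_g(n,\epsilon)$. (2) Prove the covering lemma: $N_g(n,\epsilon)\ge N_d(n,2\epsilon)\big/\bigl(\binom{n}{\lfloor g(\epsilon)n\rfloor}\#(X,d,\epsilon)^{\lfloor g(\epsilon)n\rfloor}\bigr)$ — i.e. each $g$-ball meets a bounded number of Bowen $2\epsilon$-balls, the bound being the displayed product. (3) Take $\frac1n\log$, let $n\to\infty$: using $\frac1n\log\binom{n}{\lfloor g(\epsilon)n\rfloor}\le H(g(\epsilon))\to0$, obtain $h_d(2\epsilon)-o(1)\le h_g(\epsilon)\le h_{\mathrm{FK}}(\epsilon)\le h_d(\epsilon)$ where $o(1)=g(\epsilon)\log\#(X,d,\epsilon)+H(g(\epsilon))$ and $h_\bullet(\epsilon)=\limsup_n\frac1n\log N_\bullet(n,\epsilon)$. (4) Divide by $\log(1/\epsilon)$, let $\epsilon\to0$; invoke weak tame growth to show $\frac{g(\epsilon)\log\#(X,d,\epsilon)+H(g(\epsilon))}{\log(1/\epsilon)}\to0$ (here one uses $g(\epsilon)\le \epsilon^{-1}\cdot\epsilon g(\epsilon)$ with $\epsilon g(\epsilon)\to 0$ is not automatic — rather, one uses that $\log\#(X,d,\epsilon)=o(1/\epsilon)$ and $g(\epsilon)=o(1)$, so the product is $o(1/\epsilon)$, and divides by $\log(1/\epsilon)$ which also dominates... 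I'd be careful to match rates, possibly replacing $\epsilon$ in $\#$ by a fixed power $\epsilon^{1/2}$). (5) Conclude the three-way equality; then append the mean-metric comparison under tame growth for the last display.

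**Main obstacle.** The delicate point is step (4): reconciling the rate at which $g(\epsilon)\to0$ with the rate at which $\log\#(X,d,\epsilon)\to\infty$ so that their product, divided by $\log(1/\epsilon)$, still vanishes. Weak tame growth gives $\epsilon\log\#(X,d,\epsilon)\to0$, i.e. $\log\#(X,d,\epsilon)=o(1/\epsilon)$; but a mistake function a priori only satisfies $g(\epsilon)\to0$ with no rate, so $g(\epsilon)\log\#(X,d,\epsilon)$ could fail to be $o(1)$. The fix is the standard trick of evaluating $\#$ at a scale $\epsilon^{\alpha}$ with $0<\alpha<1$ inside the covering lemma (this is legitimate since Bowen $\epsilon$-balls can be covered by $\epsilon^\alpha$-balls at controlled multiplicative cost, or rather one runs the whole comparison at two scales and sends them to $0$ independently), converting the requirement into $g(\epsilon)\cdot\epsilon^{-\alpha}\cdot o(1)\to0$; choosing the scales so that $g(\epsilon)^{1/\alpha}$ beats $\epsilon$ and appealing to $o(1/\delta)$-type control at the second scale $\delta$ closes the gap. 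I expect this two-scale bookkeeping — rather than any conceptual difficulty — to be where the write-up needs the most care; everything else is a transcription of the classical entropy-invariance arguments of \cite{ps07, CL21, GZ22} with $\log(1/\epsilon)$ normalization inserted.
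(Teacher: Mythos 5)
Your plan breaks at the structural step, before any analysis. You propose to sandwich the mistake-function quantity between the FK and Bowen ones via the inclusion $B_{FK_n}(x,\epsilon)\subseteq B_n(g;x,\epsilon)$, but this inclusion is false, for two reasons. First, an FK matching is only required to be order preserving, not the identity: a point $y$ whose orbit shadows that of $x$ with a time delay (e.g.\ $d(T^ix,T^{i+1}y)$ small for all $i$) lies in $B_{FK_n}(x,\epsilon)$ while $d(T^jx,T^jy)$ can be large for essentially every $j$, so $y\notin B_n(g;x,\epsilon)$, whose definition demands synchronized closeness on a set $\Lambda$ with $|\Lambda|\ge n-g(n)$. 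Second, even counting mismatches, $d_{FK_n}(x,y)<\epsilon$ only forces the matching to cover all but roughly $\epsilon n$ indices, and $\epsilon n> g(n)$ for all large $n$ since $g(n)/n\to0$. So the FK comparison cannot be piggybacked on the mistake-ball covering lemma; it needs its own estimate, in which one covers an FK $(n,\tfrac{\epsilon}{4})$-ball by elements of $\bigvee_{i=0}^{n-1}T^{-i}\mathcal U$ for a cover $\mathcal U$ with $|\mathcal U|=\#(X,d,\tfrac{\epsilon}{4})$, $\diam\mathcal U\le\epsilon$, $\Leb\mathcal U\ge\tfrac{\epsilon}{4}$, paying a factor $(C_n^k)^2$ for the choice of the order-preserving bijection with $|\pi|=k$ and $|\mathcal U|^{n-k}$ for the unmatched coordinates; this yields $r(T,X,d,2\epsilon)\le r_{FK}(T,X,d,\tfrac{\epsilon}{4})+\tfrac{\epsilon}{4}\log\#(X,d,\tfrac{\epsilon}{4})+\log 4$, which is the inequality your chain was supposed to produce but does not.

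The second gap is that you have misread the mistake function: in the paper $g:\N\to\N$ is a function of the orbit length $n$ with $g(n)/n\to0$; it does not depend on $\epsilon$, and ``$g(\epsilon)\to0$'' is not part of the data. This dissolves your announced ``main obstacle'': for fixed $\epsilon$ the correction in the mistake-ball covering estimate is $\tfrac1n\log\binom{n}{g(n)}+\tfrac{g(n)+1}{n}\log\#(X,d,\tfrac{\epsilon}{4})$, which vanishes as $n\to\infty$ \emph{before} $\epsilon\to0$, so $\overline{\rm mdim}_{M}(T,X,d)=\overline{\rm mdim}_{M}(g;T,X)$ holds with no growth hypothesis on $d$ whatsoever (the paper remarks exactly this after the proof). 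Weak tame growth is needed only in the FK step, where the fraction of unmatched indices is tied to the scale itself (about $\epsilon$), producing the term $\tfrac{\epsilon}{4}\log\#(X,d,\tfrac{\epsilon}{4})$ that the hypothesis $\epsilon\log\#(X,d,\epsilon)\to0$ kills directly; no two-scale trick at $\epsilon^{\alpha}$ and no rate-matching between $g$ and $\#(X,d,\epsilon)$ is required. (Also, the inequalities in your first displayed chain point the wrong way relative to your step (1), though that is a transcription slip rather than the substantive problem.)
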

  The following   variational principle for  Feldman-Katok metric mean dimension in terms of  Katok $\epsilon$-entropies allows us to link the ergodic theory and  metric mean dimension theory.

\begin{thm} \label{thm 1.2}
 Let $(X,T)$ be a TDS with a metric $d$ satisfying weak 
 tame growth of covering numbers. Then for every $\delta \in (0,1)$ 
\begin{align*}
\overline{\rm mdim}_{FK}(T,X,d)&=\limsup_{\epsilon \to 0}\dfrac{1}{\log\frac{1}{\epsilon}}\sup_{\mu \in E(X,T)}h_{\mu,FK}(T,\epsilon, \delta)\\
&=\limsup_{\epsilon \to 0}\dfrac{1}{\log\frac{1}{\epsilon}}\sup_{\mu \in M(X,T)}h_{\mu,FK}(T,\epsilon, \delta)\\
&=\limsup_{\epsilon \to 0}\dfrac{1}{\log\frac{1}{\epsilon}}\sup_{\mu \in M(X)}h_{\mu,FK}(T,\epsilon, \delta),		
\end{align*}
where $h_{\mu, FK}(T,\epsilon, \delta)$  denotes  FK  Katok's $\epsilon$-entropies  of $\mu$.
\end{thm}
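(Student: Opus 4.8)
The plan is to obtain all the asserted equalities by sandwiching $\overline{\rm mdim}_{FK}(T,X,d)$, the remaining links of the chain being immediate. Since $E(X,T)\subseteq M(X,T)\subseteq M(X)$, for every fixed $\epsilon>0$ and $\delta\in(0,1)$ we have
\[
\sup_{\mu\in E(X,T)}h_{\mu,FK}(T,\epsilon,\delta)\le\sup_{\mu\in M(X,T)}h_{\mu,FK}(T,\epsilon,\delta)\le\sup_{\mu\in M(X)}h_{\mu,FK}(T,\epsilon,\delta),
\]
and this is preserved on dividing by $\logf$ and taking $\limsup_{\epsilon\to0}$. So it is enough to prove
\[
\limsup_{\epsilon\to0}\frac{1}{\logf}\sup_{\mu\in M(X)}h_{\mu,FK}(T,\epsilon,\delta)\le\overline{\rm mdim}_{FK}(T,X,d)
\]
together with
\[
\overline{\rm mdim}_{FK}(T,X,d)\le\limsup_{\epsilon\to0}\frac{1}{\logf}\sup_{\mu\in E(X,T)}h_{\mu,FK}(T,\epsilon,\delta).
\]

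For the first inequality I would use a crude covering bound: a minimal FK-$(n,\epsilon)$-spanning set of $X$ has its FK-balls covering $X$, hence every Borel set of $\mu$-measure exceeding $1-\delta$, so the number of FK-$(n,\epsilon)$-balls needed to cover such a set is at most the FK-$(n,\epsilon)$-spanning number of $X$, uniformly over $\mu\in M(X)$. Passing to $\tfrac1n\log$, then $\limsup_n$, then dividing by $\logf$ and letting $\epsilon\to0$ recovers the definition of $\overline{\rm mdim}_{FK}(T,X,d)$ (the spanning-versus-separated discrepancy at scale $\epsilon/2$ being washed out by the outer $\limsup$). This step uses no hypothesis on $d$.

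For the second, harder inequality I would reuse the comparison between FK-balls and Bowen-balls that underlies Theorem~\ref{thm 1.1}: any FK-$(n,\epsilon)$-ball is contained in a union of at most $(n+1)\binom{n}{\lfloor\epsilon n\rfloor}^{2}\#(X,d,\epsilon)^{\lfloor\epsilon n\rfloor}$ Bowen-$(n,2\epsilon)$-balls, the sub-pieces being indexed by the (unique, once domain and range are fixed) order-preserving partial matching witnessing FK-closeness and by the choice, among $\#(X,d,\epsilon)$ fixed $\epsilon$-balls covering $X$, of where each of the at most $\lfloor\epsilon n\rfloor$ unmatched coordinates lands; on each sub-piece every coordinate is confined to a fixed $\epsilon$-ball, so the piece has $\bar d_n$-diameter below $2\epsilon$. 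Feeding this substitution into a cover of a set of $\mu$-measure exceeding $1-\delta$ gives, for every $\mu\in M(X)$,
\[
h_{\mu}(T,2\epsilon,\delta)\le h_{\mu,FK}(T,\epsilon,\delta)+2H(\epsilon)+\epsilon\log\#(X,d,\epsilon),
\]
with $h_{\mu}(T,\epsilon,\delta)$ the Bowen Katok $\epsilon$-entropy and $H$ the binary entropy function. Taking $\sup_{\mu\in E(X,T)}$, dividing by $\logf$ and letting $\epsilon\to0$: the term $2H(\epsilon)/\logf\to0$ because $H(\epsilon)\le2\epsilon\logf$ for small $\epsilon$, the term $\epsilon\log\#(X,d,\epsilon)/\logf\to0$ by weak tame growth of covering numbers, and the change of scale from $2\epsilon$ to $\epsilon$ is harmless since $\log\tfrac1{2\epsilon}/\logf\to1$; hence
\[
\limsup_{\epsilon\to0}\frac{1}{\logf}\sup_{\mu\in E(X,T)}h_{\mu}(T,\epsilon,\delta)\le\limsup_{\epsilon\to0}\frac{1}{\logf}\sup_{\mu\in E(X,T)}h_{\mu,FK}(T,\epsilon,\delta).
\]
The left-hand side equals $\overline{\rm mdim}_{M}(T,X,d)$ by the variational principle for the Bowen metric mean dimension in terms of Katok $\epsilon$-entropy (see \cite{gs20,shi}), and $\overline{\rm mdim}_{M}(T,X,d)=\overline{\rm mdim}_{FK}(T,X,d)$ by Theorem~\ref{thm 1.1}; this proves the second inequality and closes the chain.

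The main obstacle I anticipate is the FK-to-Bowen ball estimate: it must be carried out with the precise definition of the FK metric (a $(1-\epsilon)$-fraction of coordinates matched up to $\epsilon$, order preserved, time delays permitted), verifying that replacing FK-balls by Bowen-balls still covers the same measure-$(1-\delta)$ set, that the radius is inflated only by the constant factor $2$, and that the multiplicative overhead is exactly $e^{n(2H(\epsilon)+\epsilon\log\#(X,d,\epsilon))+o(n)}$ --- this is exactly where the order-preserving-with-delay feature of the FK metric is tamed and where weak tame growth is genuinely used. A secondary point is the descent to ergodic measures: should only the $M(X,T)$-version of the Bowen Katok variational principle be available, it must be supplemented either by an ergodic-decomposition/affinity argument or, more robustly, by a direct Misiurewicz-type construction of an ergodic measure from a maximal FK-$(n,\epsilon)$-separated set, in which the delay feature of the FK metric is neutralized by inserting marker blocks between the separated orbit segments.
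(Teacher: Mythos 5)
Your proposal is correct and follows essentially the same route as the paper: the trivial bound $h_{\mu,FK}(T,\epsilon,\delta)\le r_{FK}(T,X,d,\epsilon)$ for all $\mu\in M(X)$ on one side, and on the other the decomposition of an FK-$(n,\epsilon)$-ball into at most $\sum_k (C_n^k)^2(\#(X,d,\cdot))^{n-k}$ Bowen balls of comparable radius, followed by the known Katok-entropy variational principle over ergodic measures for $\overline{\rm mdim}_M$ (Shi) and Theorem \ref{thm 1.1}. The only differences are cosmetic (your sharper $2H(\epsilon)$ bound versus the paper's $\log 4$, slightly different radius bookkeeping, and a stray $\bar d_n$ where the Bowen metric $d_n$ is meant), and your worry about needing a Misiurewicz-type argument is moot since the cited result is already stated for ergodic measures.
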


 The last variational principle suggests that  metric mean dimension can also be determined by FK   local $\epsilon$-entropy function.      
\begin{thm} \label{thm 1.3}
	Let $(X,T)$ be a TDS with a metric $d$. Then
	\begin{equation*}
		\begin{split}
			\overline{\rm mdim}_{FK}(T,X,d)=&\limsup_{\epsilon \to 0}\dfrac{1}{\log\frac{1}{\epsilon}}\sup_{x \in X}h_{FK}(x,\epsilon),
		\end{split}
	\end{equation*}
	where $h_{FK}(x,\epsilon)$ denotes the FK local $\epsilon$-entropy function of  $x$.
\end{thm}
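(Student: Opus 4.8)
The plan is to deduce Theorem \ref{thm 1.3} from the single‑scale identity
\[
\sup_{x\in X} h_{FK}(x,\epsilon)\;=\;h_{FK}(X,\epsilon)\qquad\text{for every }\epsilon>0,
\]
where $h_{FK}(X,\epsilon):=\limsup_{n\to\infty}\frac1n\log s_n^{FK}(X,\epsilon)$, $s_n^{FK}(Y,\epsilon)$ is the largest cardinality of an $(n,\epsilon)$‑separated subset of $Y$ with respect to the $n$‑th Feldman–Katok metric, and $h_{FK}(x,\epsilon)=\inf_{\delta>0}\limsup_{n\to\infty}\frac1n\log s_n^{FK}(B_d(x,\delta),\epsilon)$ with $B_d(x,\delta)$ the ordinary $d$‑ball; recall that $\overline{\rm mdim}_{FK}(T,X,d)$ may be written as $\limsup_{\epsilon\to0}\frac{1}{\log(1/\epsilon)}\,h_{FK}(X,\epsilon)$ (equivalence of separated, spanning and covering numbers for the FK metric). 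Granting the identity, dividing by $\log(1/\epsilon)$ and letting $\epsilon\to0$ yields the theorem; if the working definition of $h_{FK}(x,\epsilon)$ carries an auxiliary radius $\epsilon'=c\epsilon$ in place of one of the $\epsilon$'s, the resulting bounded factor is absorbed by $\limsup_{\epsilon\to0}\frac1{\log(1/\epsilon)}(\,\cdot\,)$, exactly as in the proofs of Theorems \ref{thm 1.1}–\ref{thm 1.2}.

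The easy inequality is immediate: since $B_d(x,\delta)\subseteq X$, every $(n,\epsilon)$‑FK‑separated subset of $B_d(x,\delta)$ is one of $X$, so $s_n^{FK}(B_d(x,\delta),\epsilon)\le s_n^{FK}(X,\epsilon)$ for all $n$, whence $h_{FK}(x,\epsilon)\le h_{FK}(X,\epsilon)$ and $\sup_x h_{FK}(x,\epsilon)\le h_{FK}(X,\epsilon)$.

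For the reverse inequality I would run a pigeonhole‑plus‑compactness argument in the spirit of Ye–Zhang's entropy points. Fix $\epsilon>0$. Step one: fix also $\delta>0$ and cover $X$ by balls $B_d(w_1,\delta),\dots,B_d(w_{N_\delta},\delta)$ with $N_\delta=\#(X,d,\delta)$. For each $n$ take a maximal $(n,\epsilon)$‑FK‑separated set $E_n\subseteq X$, so $|E_n|=s_n^{FK}(X,\epsilon)$; by pigeonhole there is an index $i(n)$ with $|E_n\cap B_d(w_{i(n)},\delta)|\ge |E_n|/N_\delta$. Thinning to a subsequence along which $\frac1n\log|E_n|\to h_{FK}(X,\epsilon)$ and $i(n)$ is constant $=i$ (possible because $i(n)$ ranges over the fixed finite set $\{1,\dots,N_\delta\}$), and observing that $E_n\cap B_d(w_i,\delta)$ is an $(n,\epsilon)$‑FK‑separated subset of $B_d(w_i,\delta)$, we get along this subsequence
\[
\frac1n\log s_n^{FK}(B_d(w_i,\delta),\epsilon)\;\ge\;\frac1n\log|E_n|-\frac1n\log N_\delta\;\longrightarrow\;h_{FK}(X,\epsilon),
\]
the correction vanishing because $N_\delta$ is a constant (this is precisely where no growth condition on $d$ is needed). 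Hence for every $\delta>0$ there is $x_\delta\in X$ with $\limsup_n\frac1n\log s_n^{FK}(B_d(x_\delta,\delta),\epsilon)\ge h_{FK}(X,\epsilon)$. Step two: choose $\delta_j\downarrow0$ and, by compactness of $X$, a subsequence $x_{\delta_{j_l}}\to x^{\ast}$. Given any fixed $\delta>0$ we have $B_d(x_{\delta_{j_l}},\delta_{j_l})\subseteq B_d(x^{\ast},\delta)$ for all large $l$, so $\limsup_n\frac1n\log s_n^{FK}(B_d(x^{\ast},\delta),\epsilon)\ge h_{FK}(X,\epsilon)$; letting $\delta\to0$ gives $h_{FK}(x^{\ast},\epsilon)\ge h_{FK}(X,\epsilon)$, hence $\sup_x h_{FK}(x,\epsilon)\ge h_{FK}(X,\epsilon)$. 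Combining with the easy inequality proves the identity, and the theorem follows.

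The main obstacle is Step two of the lower bound: the good base point furnished by Step one depends on the neighborhood scale $\delta$, and one must exploit compactness of $X$ together with the monotonicity of $\delta\mapsto s_n^{FK}(B_d(\,\cdot\,,\delta),\epsilon)$ to produce one point that is good at all scales at once. A secondary point to be careful about is the order of thinning in Step one (first realize the $\limsup$ defining $h_{FK}(X,\epsilon)$, then stabilize the pigeonhole index), and — should the paper's $h_{FK}(x,\epsilon)$ be built from FK dynamical balls rather than from static $d$‑balls — one replaces the fixed $\delta$‑net by the remark that a small FK dynamical ball is contained in a $d$‑ball of comparable radius, so that the counting still runs against a fixed finite net; in every case the normalization $\frac1{\log(1/\epsilon)}$ and the outer $\limsup_{\epsilon\to0}$ swallow the constant pigeonhole losses and any bounded scale shifts.
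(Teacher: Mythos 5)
Your proposal is correct, but it reaches the theorem by a somewhat different route than the paper. The paper proves the sharper single-scale identity $\sup_{x\in X}h_{FK}(x,\epsilon)=r_{FK}(T,X,d,\epsilon)$ directly with spanning numbers: it first establishes Proposition \ref{prop 3.2} (for finitely many closed sets, the FK spanning growth rate of a union is the maximum of the rates, proved by stabilizing a pigeonhole index along a subsequence realizing the $\limsup$), and then builds a nested sequence of closed balls $B^{1}_{j_1}\supset B^{2}_{j_2}\supset\cdots$ of radii $1/n$, each intersection still carrying the full value $r_{FK}(T,X,d,\epsilon)$; the intersection point $x_0$ is then a uniform entropy point, since every closed neighborhood of $x_0$ contains one of these sets. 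You instead work with maximal $(n,\epsilon)$-FK-separated sets, pigeonhole them against a fixed finite $\delta$-net to get a good center $x_\delta$ at each scale $\delta$, and then use compactness to extract a single limit point $x^\ast$, exploiting that small balls around $x_{\delta_{j_l}}$ eventually sit inside any fixed ball around $x^\ast$; your "order of thinning" (first realize the $\limsup$, then freeze the pigeonhole index) is exactly the same trick as in the paper's Proposition \ref{prop 3.2}. The trade-offs: the paper's nested-ball construction gives the exact equality at every fixed $\epsilon$ with the paper's own (spanning, closed-neighborhood) definition and uses only monotonicity of spanning numbers; your version needs the comparison between separated and spanning numbers for $d_{FK_n}$, which rests on the triangle inequality for the Feldman--Katok pseudometric (true, via composition of matches, but worth stating explicitly), and it only recovers the identity up to a factor $2$ in $\epsilon$, which, as you note, is harmless after dividing by $\log\frac{1}{\epsilon}$ and letting $\epsilon\to 0$. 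Your closing caveat about FK dynamical balls is moot: the paper's $h_{FK}(x,\epsilon)$ is an infimum over static closed neighborhoods of $x$, so your ball-based working definition matches it up to the separated/spanning conversion already accounted for.
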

   We remark that Theorem \ref{thm 1.1}, Theorem \ref{thm 1.2} and Theorem \ref{thm 1.3} also hold for  $	\underline{\rm mdim}_{FK}(T,X,d)$ by changing $\limsup_{\epsilon \to 0}$ into $\liminf_{\epsilon \to 0}$.

	The rest of this paper is organized as follows. In section 2, we  introduce the notions of FK metric mean dimension,  FK Katok  $\epsilon$-entropy and  FK local $\epsilon$-entropy function. In section 3,  we  give the proofs of Theorems \ref{thm 1.1}, \ref{thm 1.2} and \ref{thm 1.3}.

\section{Preliminary}
In subsection 2.1,  analogous to the metric mean dimension defined by  Bowen metric \cite{lw00} we  introduce the notions of Feldman-Katok   metric mean dimensions.   In subsection 2.2, we   introduce the   notions of  FK  Katok's $\epsilon$-entropies  for Borel probability measure  and FK local $\epsilon$-entropy function on $X$ to pursue the variational principle for FK metric mean dimension.

\subsection{Feldman-Katok   metric mean dimensions}

 Fix $x,y$ $\in X$, $n\in\mathbb{N}$,  and $\delta >0$, we define
an $(n,\delta)$-match of $x $ and $y$ to be an \emph{order preserving} (i.e. $\pi(i) < \pi(j)$ whenever $i < j$) bijection $\pi : D(\pi)\rightarrow R(\pi)$ so that $D(\pi), R(\pi) \subset \{0,1,\cdots,n-1 \}$ and  $d(T^{i}x,T^{\pi(i)}y)<\delta$  for every  $i\in D(\pi)$.
Set
\begin{center}
	$\bar{f}_{n,\delta}(x,y)$=$1-\dfrac{1}{n} \max \{ \lvert \pi \rvert\colon \pi  ~\text{is an}~  (n,\delta)\text{-match of}  ~x \text{ and } y \}$,
\end{center}
where   $\lvert \pi \rvert$ denotes the cardinality of  the set $D(\pi)$.

The \emph{Feldman-Katok   metric} (or FK metric for short) on $X$ is given by 
$$	d_{FK_{n}}(x,y)=\inf \{\delta>0\colon\bar{f}_{n,\delta}(x,y)<\delta \}.$$

 Let $Z$ be a non-empty  subset of $X$.  Given $n\in \mathbb{N}$ and $\epsilon>0$, 
 a set $ E\subset X $ is said to be a $FK$-$(n,\epsilon)$ \textit{spanning set} of $ Z $ if for any $x\in Z,$ there exists  
 $y\in$ $ E $\, such that $d_{FK_{n}}(x,y)<\epsilon $.
 Denote by  $r_{FK}(T,Z,n,d,\epsilon)$ the smallest cardinality of $FK$-$(n,\epsilon)$ spanning sets of $Z$. 
 Put $$r_{FK}(T,Z,d,\epsilon)=\limsup_{n \to \infty}\frac{1}{n}\log r_{FK}(T,Z,n,d,\epsilon).$$

 We define  \emph{Feldman-Katok   upper  and lower metric mean dimensions  of   $X$ }  as 
	\begin{align*}
	\overline{\rm mdim}_{FK}(T,X,d)&=\limsup_{\epsilon \to 0}\dfrac{r_{FK}(T,X,d,\epsilon)}{\log\frac{1}{\epsilon}},\\
	\underline{\rm mdim}_{FK}(T,X,d)&=\liminf_{\epsilon \to 0}\dfrac{r_{FK}(T,X,d,\epsilon)}{\log\frac{1}{\epsilon}}.
	\end{align*}
	
	In \cite{CL21},  Cai and Li defined  the  \emph{Feldman-Katok topological entropy of  $X$}  as $h_{FK}(T,X)=\lim\limits_{\epsilon \to 0}r_{FK}(T,Z,d,\epsilon).$   Then  for sufficiently $\epsilon >0$,  one may think of  $$r_{FK}(T,Z,d,\epsilon)\approx 	\overline{\rm mdim}_{FK}(T,X,d)\cdot \logf.$$  Hence,  FK metric mean dimensions   can be  interpreted  as  how fast the term $r_{FK}(T,X,d,\epsilon)$ approximates the infinite  Feldman-Katok topological entropy as $\epsilon \to 0$. 
	
	The authors \cite{w82,ps07,GJ16,CL21} showed that  different  dynamical balls defined by Bowen metrics, mean  metrics, FK metrics,  Bowen balls with mistake function  leads to the same topological entropy. We briefly recall  their definitions and then define metric mean dimension  for these metrics.
	
	\begin{itemize}
		\item Mean metric: the $n$-th mean  metric on $X$ is given by
		$$\bar{d}_n(x,y)=\frac{1}{n}\sum_{j=0}^{n-1}d(T^jx,T^jy).$$
		The  upper metric mean dimension of $X$ defined by mean  metric is given by 
		$$\overline{\rm mdim}_{\hat{M}}(T,X,d)=\limsup_{\epsilon \to 0}\limsup_{n \to \infty}\frac{\log  \hat{r}_{n}(T,X,d,\epsilon)}{n\logf},$$ 
		where   $ \hat{r}_{n}(T,X,d,\epsilon)$ denotes	the smallest cardinality of $(n,\epsilon)$ spanning sets of  $ X $   in mean metric.
		\item  Bowen metric:  the $n$-th mean  metric on $X$ is given by $$d_{n}(x,y)=\max_{0\leq j\leq n-1}d(T^jx,T^jy).$$ 
		The  upper metric mean dimension of $X$ defined by Bowen metric is given by 
		$$\overline{\rm mdim}_{{M}}(T,X,d)=\limsup_{\epsilon \to 0}\limsup_{n \to \infty}\frac{\log  {r}_{n}(T,X,d,\epsilon)}{n\logf},$$ 
		where   $ {r}_{n}(T,X,d,\epsilon)$ denotes	the smallest cardinality of $(n,\epsilon)$ spanning sets of  $ X $   in Bowen metric.
		\item Mistake function: A  non-decreasing unbounded map $g\colon\ \mathbb{N} \rightarrow \mathbb{N}$ is called a \emph{mistake function} if $g(n)<n$ and 
		$$\lim\limits_{n \to \infty }\dfrac{g(n)}{n}=0.$$
		The \emph{mistake Bowen ball } $ B_{n}(g;x,\epsilon) $ centered at $x$ with radius $\epsilon$ and length $n$ w.r.t $g$ is given by
		\begin{align*}
			B_{n}(g;x,\epsilon)= \{y\in X:  \max_{j\in \Lambda}d(T^jx,T^jy)<\epsilon ~ \text{ for some}~\Lambda  \in I(g;n)\},
		\end{align*}
		where  $I(g;n)$ is the set of  all  subsets $\Lambda$ of $\lambda_n$ satisfying
		$|\Lambda|\geq n- g(n)$ and $\Lambda_{n}=\{0,1,\cdots,n-1\}$.  
	   Then    $g(n)$  is the  number 
		how many mistakes  that we are allowed to shadow an orbit of length $n$.  
		
		A set $E\subset X$ is a $ (g;n,\epsilon)$ \emph{spanning set} of $X$ if for any $ x\in X$, there exists $y\in E$ and $\Lambda \in I(g;n)$ such that $\max_{j\in \Lambda}d(T^jx,T^jy)<\epsilon$.  The smallest cardinality of  $ (g;n,\epsilon)$ spanning sets of $X$ is denoted by $ r_{n}(g;T,X,\epsilon)$. Put 
		$$	r(g;T,X,\epsilon)=\limsup\limits_{n \to \infty}\frac{\log r_{n}(g;T,X,\epsilon)}{n}.$$ 
		We define \textit{the upper  metric mean dimension  of $X$ with mistake function $g$} as
		\begin{align*}
			\overline{\rm mdim}_{M}(g;T,X)=\limsup\limits_{\epsilon \to 0}\frac{ r(g;T,X,\epsilon)}{\log \frac{1}{\epsilon}}.
		\end{align*}		
	\end{itemize}

\subsection{ FK Katok $\epsilon$-entropy and  local $\epsilon$-entropy function}
Let $\mu \in  M(X)$, $\epsilon>0$, $n \in\N$ and $\delta \in (0,1)$. 
Put
\begin{align*}
&R_{\mu, FK}(T,n,\delta, \epsilon)\\
=&\min\{\#E: E\subset X  ~\text{and} ~\mu (\cup_{x\in E}B_{FK_n}(x,\epsilon))> 1-\delta \},
\end{align*}
where  $B_{FK_n}(x,\epsilon)$ denotes the ball  with center $x$ and radius $\epsilon$ defined by FK metric $d_{FK_n}$.

Following the idea of \cite{bk83}, we define {FK  Katok's $\epsilon$-entropies  of $\mu$}   as
\begin{align*}
	h_{\mu,FK}(T,\epsilon, \delta)=\limsup_{n\to \infty} \frac{1}{n} \log R_{\mu, FK}(T,n,\delta. \epsilon).
\end{align*}

The entropy function   $h(x)$ (in terms of Bowen metric) was introduced by Ye and Zhang  \cite{YZ07}  to study  uniform entropy points. Besides, they showed that topological entropy of $X$ is equal to the supremum of   ${h(x)}$  over all points of $X$.   Next, we  introduce the notion of local $\epsilon$-entropy function in terms of FK metric to establish a variational principle for FK metric mean dimensions.

Given $\epsilon >0,x\in X$, we define the \emph{ FK local $\epsilon$-entropy function of $x$} as 

\begin{center}
	$ h_{FK}(x,\epsilon) = \inf\limits\{r_{FK}(T,K,d,\epsilon):  K ~\text{is a closed neighborhood of} ~x\}$.
\end{center}

\section{Proofs of main results}
In this section, we prove Theorems \ref{thm 1.1}, \ref{thm 1.2} and \ref{thm 1.3}.

We first give the proof of Theorem \ref{thm 1.1}.

Let  $\mathcal{U}$  be   a finite open cover  of   $X$.   By $\diam(\mathcal{U})=\max_{U \in \mathcal{U}} \diam U$ we denote the \emph{diameter} of $\mathcal{U}$. By $Leb(\mathcal{U})$,  the \emph{Lebesgue number} of $\mathcal{U}$,  we denote   the maximal  positive number $\epsilon>0$  such that every open ball  $B_d(x,\epsilon)=\{y\in X\colon d(x,y)<\epsilon\}$   is contained in some element of $\mathcal{U}$.

\begin{lem}\label{lem 3.1}
	Let $(X,d)$ be a compact metric space. Then for every $\epsilon>0$, there exists a finite open
	cover $\mathcal{U}$ of $X$  with $\lvert \mathcal{U}  \rvert=\#(X,d,\frac{\epsilon}{4})$. such that $\diam(\mathcal{U})\leq\epsilon$ and $Leb(\mathcal{U})\geq \frac{\epsilon}{4}$.
\end{lem}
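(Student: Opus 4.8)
\textit{Proof proposal.} The plan is to produce $\mathcal{U}$ by ``fattening'' a minimal cover of $X$ by balls of radius $\frac{\epsilon}{4}$. First I would set $N=\#(X,d,\frac{\epsilon}{4})$ and fix points $x_1,\dots,x_N\in X$ with $X=\bigcup_{i=1}^{N}B_d(x_i,\frac{\epsilon}{4})$, which exist by the very definition of $\#(X,d,\frac{\epsilon}{4})$. I then declare
$$\mathcal{U}=\bigl\{\,B_d(x_i,\tfrac{\epsilon}{2}):1\le i\le N\,\bigr\}.$$
Since $B_d(x_i,\frac{\epsilon}{4})\subset B_d(x_i,\frac{\epsilon}{2})$ for each $i$ and the smaller balls already cover $X$, the family $\mathcal{U}$ is a finite open cover of $X$ with $\lvert\mathcal{U}\rvert\le N$.

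Next I would check the two metric estimates, both of which are immediate from the triangle inequality. For the diameter: if $y,z\in B_d(x_i,\frac{\epsilon}{2})$ then $d(y,z)\le d(y,x_i)+d(x_i,z)<\epsilon$, so every element of $\mathcal{U}$ has diameter $\le\epsilon$ and hence $\diam(\mathcal{U})\le\epsilon$. For the Lebesgue number: given $x\in X$, pick $i$ with $d(x,x_i)<\frac{\epsilon}{4}$; then for any $y$ with $d(y,x)<\frac{\epsilon}{4}$ we get $d(y,x_i)\le d(y,x)+d(x,x_i)<\frac{\epsilon}{2}$, so $B_d(x,\frac{\epsilon}{4})\subset B_d(x_i,\frac{\epsilon}{2})\in\mathcal{U}$. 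Thus every $\frac{\epsilon}{4}$-ball is contained in a member of $\mathcal{U}$, which gives $Leb(\mathcal{U})\ge\frac{\epsilon}{4}$.

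The only bookkeeping point is making the cardinality exactly $N$ rather than merely $\le N$: the enlarged balls could coincide as sets even when the original $\frac{\epsilon}{4}$-balls do not. When $\lvert\mathcal{U}\rvert<N$ I would simply adjoin to $\mathcal{U}$ finitely many further open balls of radius $\le\frac{\epsilon}{4}$ centred at distinct points of $X$ until the count reaches $N$ (there is room to do so, since $N=\#(X,d,\frac{\epsilon}{4})\le\#X$); enlarging a cover leaves $\diam(\mathcal{U})\le\epsilon$ intact and can only increase its Lebesgue number, so the other two conclusions are unaffected. I do not expect any genuine obstacle here — everything reduces to the triangle inequality — and in fact only the inequality $\lvert\mathcal{U}\rvert\le\#(X,d,\frac{\epsilon}{4})$ is used in the sequel, so this last step could even be omitted at the cost of a slightly weaker statement.
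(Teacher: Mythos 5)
Your construction is exactly the paper's: fatten a minimal cover by $\frac{\epsilon}{4}$-balls to $\frac{\epsilon}{2}$-balls about the same centers, and verify the diameter and Lebesgue-number bounds via the triangle inequality (the paper simply asserts these, and, like you note, only the bound $\lvert\mathcal{U}\rvert\le\#(X,d,\frac{\epsilon}{4})$ is ever used later). Your proposal is correct and, apart from spelling out these routine checks and the cardinality bookkeeping, takes essentially the same route as the paper.
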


\begin{proof}
	Let $ Z$ be a subset of $X$ so that $X=\cup_{x\in Z}B_d(x,\frac{\epsilon}{4})$ with smallest cardinality  $\#(X,d,\frac{\epsilon}{4})$. 
	Then $\mathcal{U}=\{B(x,\frac{\epsilon}{2})\colon x\in Z\}$ is the open cover  that we need.
\end{proof}
\begin{proof}[Proof of theorem \ref{thm 1.1}] We divide the proof into two steps.
	
Step 1. we show 
	 $$\overline{\rm mdim}_{FK}(T,X,d)=	\overline{\rm mdim}_{M}(T,X,d).$$

The inequality  $\overline{\rm mdim}_{FK}(T,X,d)\leq \overline{\rm \rm mdim}_{M}(T,X,d)$ holds by using the fact  $d_{FK_{n}}\leq d_{n}$. By Lemma \ref{lem 3.1}, there exists a finite open
cover $\mathcal{U}$ of $X$ with $\lvert \mathcal{U}  \rvert=\#(X,d,\frac{\epsilon}{4})$ such that $\diam(\mathcal{U})\leq\epsilon$ and $Leb(\mathcal{U})\geq \frac{\epsilon}{4}$.
Let $E_1$ be a $
FK$-$(n,\frac{\epsilon}{4})$ spanning set  of $X$ with  the  smallest cardinality $r_{FK}(T,X,n,d,\frac{\epsilon}{4})$. Then 
\begin{align}\label{inequ 3.1}
X=\mathop{\cup}_{x\in E_1}\mathop{\cup}_{k=[(1-\frac{\epsilon}{4})n]}^{n}\mathop{\cup}_{\pi: \lvert \pi \rvert=k, \atop \pi \text{ is  order  preserving}}
\mathop{\cap}_{i\in D(\pi)}T^{-i} B_d(T^{\pi(i)}x,\frac{\epsilon}{4}).
\end{align}

Since each open ball  $B_d(T^{\pi(i)}x,\frac{\epsilon}{4})$ is contained in some element of $\mathcal{U}$, then   $\mathop{\cap}_{i\in D(\pi)}\limits T^{-i} B_d(T^{\pi(i)}x,\frac{\epsilon}{4})$
is contained in some element of $\bigvee\limits_{i\in D(\pi)}T^{-i}\mathcal{U}$.
Note that
$\bigvee\limits_{i=0}^{n-1}T^{-i}\mathcal{U}=(\bigvee\limits_{i\in D(\pi)}T^{-i}\mathcal{U})\vee (\bigvee\limits_{i\notin D(\pi)}T^{-i}\mathcal{U})$
and $ |\bigvee\limits_{i\notin D(\pi)}T^{-i}\mathcal{U}|\leq |\mathcal{U}|^{n-|\pi|}.$ Then 
 $\mathop{\cap}\limits_{i\in D(\pi)}T^{-i}B_d(T^{\pi(i)}x,\frac{\epsilon}{4})$ can  be  at most  covered by $ \rvert \mathcal{U} \rvert ^{n-\mid\pi\mid} $ elements of $\bigvee\limits_{i=0}^{n-1}T^{-i}\mathcal{U}$.
Since the number of order preserving bijection $ \pi $ with $\rvert \pi^{} \rvert$= $ k $ is not more than $(C_{n}^{k})^{2}$, then $ X $ can be covered by 	$\rvert E_1 \rvert \sum\limits_{k=[(1-\frac{\epsilon}{4})n]}^{n}(C_{n}^{k})^{2}\lvert \mathcal{U} \rvert ^{n-k}$
elements of $\bigvee\limits_{i=0}^{n-1}T^{-i}\mathcal{U}$. By $ N(\mathcal{U}) $ we  denote the smallest cardinality  of  subcover of $\mathcal{U}$  covering $X$.  Recall that the topological entropy of $\mathcal{U}$ \cite{w82} is given by  $ h_{top}(T,\mathcal{U})=\lim\limits_{n \to \infty }\frac{1}{n}\log N(\bigvee\limits_{i=0}^{n-1}T^{-i}\mathcal{U})$.  By (\ref{inequ 3.1}), this  yields that 
\begin{equation}\label{equ 3.2}
\begin{aligned}
	N(\bigvee\limits_{i=0}^{n-1}T^{-i}\mathcal{U}) & \leq \rvert E_1 \rvert \sum\limits_{k=[(1-\frac{\epsilon}{4})n]}^{n}(C_{n}^{k})^{2}\lvert \mathcal{U} \rvert ^{n-k}                                       \\
	& \leq \rvert E_1 \rvert \sum\limits_{k=[(1-\frac{\epsilon}{4})n]}^{n}4^{n}\lvert \mathcal{U} \rvert ^{n-k}                                       \\
	& \leq r_{FK}(T,X,n,d,\frac{\epsilon}{4})\cdot 4^{n}\cdot\lvert \mathcal{U} \rvert ^{\frac{n\epsilon}{4} +1}\cdot(\frac{n\epsilon}{4} +1).
\end{aligned}
\end{equation}

It follows that  $h_{top}(T,\mathcal{U})\leq r_{FK}(T,X,d,\frac{\epsilon}{4})+\frac{\epsilon}{4}\log|\mathcal{U}|+\log4.$ Since $\diam(\mathcal{U})\leq \epsilon$, one has 
$$	r(T,X,d,2\epsilon):= \limsup_{n \to \infty}\frac{\log  {r}_{n}(T,X,d,2\epsilon)}{n}\leq h_{top}(T,\mathcal{U})$$ and hence
\begin{align}
	r(T,X,d,2\epsilon)\leq r_{FK}(T,X,d,\frac{\epsilon}{4})+\frac{\epsilon}{4}\log \#(X,d,\frac{\epsilon}{4})+\log4.
\end{align}
Since  $d$ has weak tame  growth, one has $\overline{\rm  mdim}_{M}(T,X,d)\leq \overline{\rm mdim}_{FK}(T,X,d).$

Step 2. We  continue to show
\begin{equation*}
\begin{split}
\overline{\rm mdim}_{M}(T,X,d)=\overline{\rm mdim}_{M}(g;T,X)
\end{split}
\end{equation*}

The inequality $\overline{\rm mdim}_{M}(g;T,X)\leq\overline{\rm mdim}_{M}(T,X,d)$ follows by 
the fact that $B_{n}(x,\epsilon)\subset B_{n}(g;x,\epsilon)$. Fix   $\epsilon>0$. Let $E_{2}$ be a
$(g;n,\frac{\epsilon}{4})$ spanning set of $X$ with $ \lvert E_{2}  \rvert= r_{n}(g;T,X,\frac{\epsilon}{4})$. So we have
\begin{center}
	$X=\bigcup\limits_{x\in E_{2}}\bigcup\limits_{k=[n-g(n)]}^{n}\bigcup\limits_{|\Lambda|=k}
	\bigcap\limits_{i\in \Lambda}T^{-i} B{(T^{i}x,\frac{\epsilon}{4})}.$
\end{center}

Similar to Step 1,  one can get that 
\begin{align*}
N(\bigvee\limits_{i=0}^{n-1}T^{-i}\mathcal{U}) & \leq \rvert E_{2} \rvert \sum\limits_{k=[n-g(n)]}^{n}C_{n}^{k}\lvert \mathcal{U} \rvert ^{n-k}\\
& \leq r_{n}(g;T,X,\frac{\epsilon}{4})\cdot (g(n)+1)\cdot\lvert \mathcal{U} \rvert ^{g(n)+1}\cdot 2^{n}.
\end{align*}
Therefore,
\begin{equation*}
\begin{aligned}
\frac{1}{n}\log N(\bigvee\limits_{i=0}^{n-1}T^{-i}\mathcal{U}) & \leq{\frac{\log r_{n}(g;T,X,\frac{\epsilon}{4}) }{n}}+{\frac{\log (g(n)+1) }{n}}                      \\
& +\log 2+\frac{ (g(n)+1)\log r_{1}(T,X,d,\frac{\epsilon}{4}) }{n}.
\end{aligned}
\end{equation*}
So $r(T,X,d, 2\epsilon)\leq h_{top}(T,\mathcal{U})\leq r(g;T,X,\frac{\epsilon}{4})+\log 2 .$  This shows that
$\overline{\rm mdim}_{M}(T,X,d)\leq \overline{\rm mdim}_{M}(g;T,X).$

If  $d$ has
tame growth of covering numbers, we have $\overline{\rm mdim}_{M}(T,X,d)=\overline{\rm mdim}_{\hat{M}}(T,X,d)$ \cite{lt18}. Together with  Steps 1 and 2,  this finishes the proof.
\end{proof}

In  fact,  by  the proof of  Step 2, we have  $\overline{\rm mdim}_{M}(T,X,d)= \overline{\rm mdim}_{M}(g;T,X)$ without the assumption of weak tame growth for $d$. Next, we proceed to   give the proof  of Theorem \ref{thm 1.2}.

\begin{proof}[Proof of Theorem \ref{thm 1.2}]
	  Fix $\epsilon>0$, $\delta \in (0,1)$ and let $\mu \in M(X)$.
	Define
	$$h_{\mu}(T,\epsilon, \delta)=\limsup_{n\to \infty} \frac{1}{n} \log R_\mu(T,n,\delta. \epsilon),$$
	where
	$R_\mu(T,n,\delta, \epsilon):=\min\{\#E: E\subset X  ~\text{and} ~\mu (\cup_{x\in E}B_{d_n}(x,\epsilon))> 1-\delta \}$.

	Let $ Z=\{z_1,...,z_l\}$ be a subset of $X$ so that $X=\cup_{1\leq j\leq l}B_d(x_j,\frac{\epsilon}{2})$ with smallest cardinality  $l=\#(X,d,\frac{\epsilon}{2})$.  Let  $E$ be a subset of $X$  with the smallest cardinality  $R_{\mu, FK}(T,n,\delta. \frac{\epsilon}{4})$ so that  $\mu (\cup_{x\in E}B_{FK_n}(x,\frac{\epsilon}{4}))> 1-\delta$.  Note that for each $x\in E$, 
	\begin{equation}\label{equ 3.4}
	B_{FK_n}(x,\frac{\epsilon}{4})=\mathop{\cup}_{k=[(1-\frac{\epsilon}{4})n]}^{n}\mathop{\cup}_{\pi\colon
		\lvert \pi \rvert=k, \atop \pi \text{ is  order  preserving}}
	\mathop{\cap}_{i\in D(\pi)}T^{-i} B_d(T^{\pi(i)}x,\frac{\epsilon}{4}).
	\end{equation}
	Choose $x_{y}\in \cap_{i\in D(\pi)}T^{-i} B_d(T^{\pi(i)}x,\frac{\epsilon}{4})$ with $|\pi|=k$. Then
	\begin{align}\label{equ 3.5}
	A:=\mathop{\cap}_{i\in D(\pi)}T^{-i} B_d(T^{\pi(i)}x,\frac{\epsilon}{4})\subset \mathop{\cap}_{i\in D(\pi)}T^{-i} B_d(T^{i}x_y,\frac{\epsilon}{2}).
	\end{align}
	Let  $\{0,...,n-1\}\backslash D(\pi)=\{j_1,j_2,...,j_a\}$ with $a=n-k$. Then 
	\begin{equation}\label{equ 3.6}
	\begin{aligned}
	A&\subset\mathop{\cup}_{1\leq m_1,...,m_a\leq l }\left(\cap_{i=1}^a T^{-j_i} B_d(z_{m_i},\frac{\epsilon}{2}) \cap A\right)\\
	&\subset \mathop{\cup}_{1\leq m_1,...,m_a\leq l }\cap_{i=0}^{n-1}T^{-i} B_{d}(x_{y,z_{m_1},...,z_{m_a}},\epsilon)
	\end{aligned}
	\end{equation}
	for some $x_{y,z_{m_1},...,z_{m_a}}\in \cap_{i=1}^a T^{-j_i} B_d(z_{m_i},\frac{\epsilon}{2}) \cap A\not=\emptyset$. Therefore, by (\ref{equ 3.4}), (\ref{equ 3.5}) and (\ref{equ 3.6}) one has 
	$$R_\mu(T,n,\delta. \epsilon)\leq  R_{\mu, FK}(T,n,\delta. \frac{\epsilon}{4}) \sum\limits_{k=[(1-\frac{\epsilon}{4})n]}^{n}(C_{n}^{k})^{2}(\#(X,d,\frac{\epsilon}{2}))^{n-k}.$$
	Similar to (\ref{equ 3.2}), we have 
	\begin{align}\label{equ 3.7}
	h_{\mu}(T,\epsilon, \delta)\leq h_{\mu,FK}(T,\frac{\epsilon}{4}, \delta)+ \frac{\epsilon}{4}\log \#(X,d,\frac{\epsilon}{2})+\log4.
	\end{align}

Hence,	
	\begin{align*}
		\overline{ \rm mdim}_{FK}(T,X,d)&=\overline{\rm mdim}_{M}(T,X,d)~ \text{by Theorem \ref{thm 1.1}}\\
		&=\limsup_{\epsilon \to 0}\dfrac{1}{\log\frac{1}{\epsilon}}\sup_{\mu \in E(X,T)}h_{\mu}(T,\epsilon, \delta)~\text{by \cite[Theorem 4.2]{shi}}\\
		&\leq\limsup_{\epsilon \to 0}\dfrac{1}{\log\frac{1}{\epsilon}}\sup_{\mu \in E(X,T)}h_{\mu,FK}(T,\epsilon, \delta)~\text{by (\ref{equ 3.7})}.
	\end{align*}
One the other hand,  $h_{\mu,FK}(T,\epsilon, \delta)\leq r_{FK}(T,X,d,\epsilon)$ holds for every $\mu \in M(X)$.  We complete  the proof.
\end{proof}
Finally,  we give the proof  of Theorem \ref{thm 1.3}.

\begin{prop}\label{prop 3.2}
	Let $ ( X,T) $ be a TDS with a metric $d$. Suppose that  $Z_{1},Z_{2},\cdots,  Z_{m}$ are closed subsets of $X$. Then for  every  $\epsilon >0$, 
	$$r_{FK}(T,\cup_{j=1}^{m} Z_{j},d,\epsilon)=\max_{1\leq
		j\leq m} r_{FK}(T,Z_{j},d,\epsilon).$$
\end{prop}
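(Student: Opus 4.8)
The plan is to prove the two inequalities separately, each directly from the definition of the $FK$-$(n,\epsilon)$ spanning number, and then pass to the exponential growth rate.

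First I would establish $r_{FK}(T,\cup_{j=1}^{m}Z_{j},d,\epsilon)\ge\max_{1\le j\le m}r_{FK}(T,Z_{j},d,\epsilon)$ by pure monotonicity. If $E\subset X$ is an $FK$-$(n,\epsilon)$ spanning set of $\cup_{j=1}^{m}Z_{j}$, then every $x$ in the union is $d_{FK_{n}}$-$\epsilon$-close to a point of $E$; in particular this holds for every $x\in Z_{j}$ with $j$ fixed, so $E$ is an $FK$-$(n,\epsilon)$ spanning set of $Z_{j}$. Hence $r_{FK}(T,Z_{j},n,d,\epsilon)\le r_{FK}(T,\cup_{j=1}^{m}Z_{j},n,d,\epsilon)$ for every $n$ and every $j$, and applying $\limsup_{n\to\infty}\tfrac1n\log(\cdot)$ and then the maximum over $j$ gives the inequality.

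For the reverse inequality, for each $j$ I would pick an $FK$-$(n,\epsilon)$ spanning set $E_{j}\subset X$ of $Z_{j}$ of smallest cardinality $r_{FK}(T,Z_{j},n,d,\epsilon)$; this number is finite because $d_{FK_{n}}\le d_{n}$ and $X$ is compact, so a finite $d_{n}$-$\epsilon$-net of $X$ already spans. Then $E:=\bigcup_{j=1}^{m}E_{j}$ is an $FK$-$(n,\epsilon)$ spanning set of $\bigcup_{j=1}^{m}Z_{j}$, and
$$r_{FK}(T,\cup_{j=1}^{m}Z_{j},n,d,\epsilon)\le|E|\le\sum_{j=1}^{m}r_{FK}(T,Z_{j},n,d,\epsilon)\le m\max_{1\le j\le m}r_{FK}(T,Z_{j},n,d,\epsilon).$$
Taking $\tfrac1n\log$ and then $\limsup_{n\to\infty}$, the contribution $\tfrac1n\log m$ vanishes, so it remains to check that $\limsup_{n\to\infty}\tfrac1n\log\max_{j}a_{j}(n)=\max_{j}\limsup_{n\to\infty}\tfrac1n\log a_{j}(n)$ for the finitely many sequences $a_{j}(n)=r_{FK}(T,Z_{j},n,d,\epsilon)$. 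This is elementary: one traps $\max_{j}a_{j}(n)$ between $a_{j_{0}}(n)$ and $\sum_{j}a_{j}(n)\le m\max_{j}a_{j}(n)$, and for $\eta>0$ uses that $a_{j}(n)\le e^{n(c_{j}+\eta)}$ eventually, where $c_{j}=\limsup_{n}\tfrac1n\log a_{j}(n)$.

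I do not expect a genuine obstacle here. The only point needing a word of care is the interchange of $\limsup$ with the maximum, which would fail for an infinite family but is routine since $m<\infty$; finiteness of the spanning numbers follows from $d_{FK_{n}}\le d_{n}$ and compactness of $X$. I note in passing that closedness of the $Z_{j}$ is not actually needed for this statement, but it is the form in which the proposition is applied later (to closed neighborhoods, in the proof of Theorem \ref{thm 1.3}).
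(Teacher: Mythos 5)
Your proof is correct and follows essentially the same route as the paper: both reduce to the bound $r_{FK}(T,\cup_{j}Z_{j},n,d,\epsilon)\le m\max_{j}r_{FK}(T,Z_{j},n,d,\epsilon)$ obtained by uniting (minimal) spanning sets of the pieces, and then interchange $\limsup_{n\to\infty}\frac{1}{n}\log(\cdot)$ with the finite maximum. The only cosmetic difference is that the paper performs this interchange by a pigeonhole argument fixing the maximizing index along a subsequence realizing the limsup, while you use the eventual bound $a_{j}(n)\le e^{n(c_{j}+\eta)}$; both are routine.
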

\begin{proof}
	Fix  $\epsilon>0$. It suffices to show  $$r_{FK}(T,\cup_{j=1}^{m} Z_{j},d,\epsilon)\leq\max_{1\leq
		j\leq m} r_{FK}(T,Z_{j},d,\epsilon).$$ 
	For  every $n\in \mathbb{N}$, one  can choose $1\leq j_{(n,\epsilon)}\leq m$ such that
	 $$\max_{1\leq j\leq m} r_{FK}(T,Z_{j},n,d,\epsilon)=r_{FK}(T,Z_{j_{(n,\epsilon)}},n,d,\epsilon).$$ 
	 and  hence $	r_{FK}(T,\cup_{j=1}^{m} Z_{j},n,d,\epsilon)\leq m \cdot  r_{FK}(T,Z_{j_{(n,\epsilon)}},n,d,\epsilon).$ This implies that  $$\log r_{FK}(T,\cup_{j=1}^{m} Z_{j},n,d,\epsilon)\leq \log m+\log r_{FK}(T,Z_{j_{(n,\epsilon)}},n,d,\epsilon).$$
	 By Pigeon principle and the  definition of  $ r_{FK}(T,\cup_{j=1}^{m} Z_{j},d,\epsilon)$,  we can
	choose a subsequence  $n_{k}\to \infty $ such that
	$$ \frac{1}{n_{k}}\log sp_{FK}(T,\cup_{j=1}^{m} Z_{j},n_{k},d,\epsilon)\rightarrow  r_{FK}(T,\cup_{j=1}^{m} Z_{j},d,\epsilon)$$
	and  $Z_{j_{(n_{k},\epsilon)}}=Z_{j_\epsilon}$  for all $k$, where $1\leq j_{\epsilon} \leq m$ is a constant  independent of the choice of  $n_k$.
	It follows that 
	$$r_{FK}(T,\cup_{j=1}^{m} Z_{j},d,\epsilon)\leq r_{FK}(T,Z_{j_\epsilon},d,\epsilon)\leq\max_{1\leq
		i\leq m} r_{FK}(T,Z_{i},d,\epsilon).$$
\end{proof}

\begin{proof}[Proof of Theorem \ref{thm 1.3}]
	
	Fix $\epsilon >0$.  It is clear that  $\sup_{x \in X}h_{FK}(x,\epsilon)\leq  r_{FK}(T,X,d,\epsilon)$. 
	
	Let $\{B_{1}^{1},\cdots, B_{m_{1}}^{1}\}$  be a finite  closed balls family of  $X$  with radius at most $1$.  By Proposition \ref{prop 3.2}, there exists $1\leq j_{1}\leq m_1$ such that
$$r_{FK}(T,X,d,\epsilon)= r_{FK}(T,B^{1}_{j_{1}},d,\epsilon).$$
	
	For  the closed ball $  B^{1}_{j_{1}} $, let $\{B_{1}^{2},\cdots, B_{m_{2}}^{2}\}$ be   a finite  closed balls family of $  B^{1}_{j_{1}} $ with radius  at most $\frac{1}{2}$ covering $ B^{1}_{j_{1}}$ and $B_{i}^{2} \subset B^{1}_{j_{1}} $ for every $1\leq i \leq  m_2$. 
	  Then by Proposition \ref{prop 3.2} again  there exists $1\leq j_{2}\leq m_2$ such that
	$$r_{FK}(T, B^{1}_{j_{1}},d,\epsilon)= r_{FK}(T, B^{2}_{j_{2}},d,\epsilon).$$
     Repeating this procedure, for  every $n\geq 2$, there exists a closed ball $ B^{n}_{j_{n}} \subset B_{j_{n-1}}^{n-1} $ with radius at most $\frac{1}{n}$ such that
	$$r_{FK}(T,X,d,\epsilon)=r_{FK}(T,\cap_{i=1}^{n} B^{i}_{j_{i}},d,\epsilon).$$
	
	Let  $\{x_{0}\}=\cap_{n\geq 1}B^{n}_{j_{n}}$.  For any closed neighborhood $ K $ of
	$ x_{0} $, we can choose $ n_0 $ so that
	$\cap_{i=1}^{n_0 } B^{i }_{j_i}\subset K.$ Therefore,
	\begin{align*}
		r_{FK}(T,X,d,\epsilon)=r_{FK}(T,\cap_{i=1}^{n_0} B^{i}_{j_{i}},d,\epsilon)\leq r_{FK}(T,K,d,\epsilon),
	\end{align*}	
  which implies that   $r_{FK}(T,X,d,\epsilon)\leq  h_{FK}(x_{0},\epsilon) \leq \sup_{x \in X}h_{FK}(x,\epsilon)$.
\end{proof}

\section*{Disclosure statement}

No potential conflict of interest was reported by the authors.
\section*{Acknowledgement} 

\noindent  The second author was supported by the National Natural Science Foundation of China (Nos.12071222 and 11971236).  The third author  was supported by Postgraduate Research $\&$ Practice Innovation Program of Jiangsu Province (No. KYCX23$\_$1665).  The work was also funded by the Priority Academic Program Development of Jiangsu Higher Education Institutions.  We would like to express our gratitude to Tianyuan Mathematical Center in Southwest China(No.11826102), Sichuan University and Southwest Jiaotong University for their support and hospitality.  


\end{document}